\newtheorem{theorem}{Theorem}[section]
\newtheorem{lemma}[theorem]{Lemma}
\newtheorem{definition}[theorem]{Definition}
\newcommand{\qed}{\rule{1mm}{3mm}}     
\newenvironment{proof}{\vspace*{\parsep}\noindent {\bf Proof:}}{\qed\\[1em]}
\begin{document}

\title {Principles Weaker than BD-N}

\author{Robert S. Lubarsky and Hannes Diener \\ Dept. of Mathematical
Sciences \\ Florida Atlantic University \\ Boca Raton, FL 33431 \\
Robert.Lubarsky@alum.mit.edu \\ Department Mathematik, Fak. IV \\
Emmy-Noether-Campus, Walter-Flex-Str. 3 \\ University of Siegen \\
57068 Siegen, Germany \\ diener@math.uni-siegen.de} \maketitle
\begin{abstract}
BD-N is a weak principle of constructive analysis. Several
interesting principles implied by BD-N have already been
identified, namely the closure of the anti-Specker spaces under
product, the Riemann Permutation Theorem, and the Cauchyness of
all partially Cauchy sequences. Here these are shown to be
strictly weaker than BD-N, yet not provable in set theory alone
under constructive logic.
\\ {\bf keywords:} anti-Specker spaces, BD-N, Cauchy
sequences, partially Cauchy, Riemann Permutation Theorem,
topological models\\{\bf AMS 2010 MSC:} 03F60, 03F50, 03C90, 26E40
\end{abstract}

\section{Introduction}
BD-N, first identified in \cite {I92} with a pre-history in \cite
{I91}, has turned out to be an important foundational principle,
being equivalent to many statements in analysis having to do with
continuity \cite {I01, IS, IY}. It is also weak, or deep, in that
it holds in all major traditions of mathematics, including
classical mathematics, intuitionism, and Russian constructivism
(based on computability). As such a central and weak principle, it
is reasonable to guess that most particular consequences of it
would either imply it or be outright provable. A moment's sober
reflection would lead one to realize that BD-N is not an atom, or
co-atom, in the Heyting algebra of statements, and that it would
almost certainly just be a matter of time before natural
intermediate statements were found. This note reports on exactly
that.

The first example is the closure of the anti-Specker spaces under
products. (For background, see \cite {BeB1, BeB2, Br09}.) Douglas
Bridges showed such closure under BD-N \cite {Br}, speculating
that they were equivalent. It was shown in \cite {RSL} that in
fact such closure does not imply BD-N. This still leaves open the
question as to whether this closure is provable outright. We show
below it is not.

The next example is the Riemann Permutation Theorem. For
background, see \cite {BeB3, BBDS}, where it is shown, among other
things, that BD-N implies the RPT. We show that RPT does not
itself imply BD-N, in a very similar manner to the anti-Specker
property, and also that it is not itself provable.

Our final example has to do with a weakening of the definition of
a Cauchy sequence, which was identified by Fred Richman (notes),
who called such sequences partially Cauchy. He showed that under
BD-N, all partially Cauchy sequences are Cauchy. A similar kind of
sequence, an almost Cauchy sequence, has also been identified
\cite {BBP}. There it was shown that, under Countable Choice, all
almost Cauchy sequences are Cauchy iff BD-N holds. It is shown
below that the Cauchyness of all partially Cauchy sequences does
not imply BD-N, and also that the Cauchyness of all partially
Cauchy sequences is not itself provable on the basis of set theory
alone.

We speculate that there is an intricate and interesting world of
unprovable principles strictly weaker than BD-N. To investigate
this, several tasks need fulfilling. For one, we would like to see
just how interesting the ones discussed here are, by how many
interesting statements they're equivalent with. We would also like
to see other such intermediate statements. Of course, we need to
know whether these intermediate properties are themselves mutually
inequivalent, and, if so, what implications might hold under
additional hypotheses, such as Countable Choice. As for an
independence result in the general case, the obvious place to look
first would be the models contained in this paper. These
topological models have a claim at being the generic models for
their specific purposes. As such, they naturally tend to keep
principles not intended to be falsified true. So, for instance,
you'd expect that, in the model falsifying RPT, partially Cauchy
sequences would still be Cauchy, unless of course the latter
assertion implied RPT. We do not attempt a thorough analysis of
all of these issues here, preferring to leave this for future
work.

Regarding the methods employed, any independence result of the
form ``A does not imply B" is shown here by providing a model of A
in which B is false. These models are all topological models,
which works essentially like forcing from classical set theory
when you leave out that part of the basic theory where you mod out
by the double negation, the purpose of which is only to model
classical instead of just constructive logic, clearly a move which
is anathema to our purposes. For background on topological models,
see \cite {G1, G2} and the addendum to \cite {RSL10}, or the brief
discussion before theorem 2.3 below.

It would be interesting to see how these issues would play out
with realizability models. The first models discovered falsifying
BD-N were of this kind \cite {Bee, BISV, L}. Each and every one of
them also exhibits a separation of the kind proved here, depending
on which among the anti-Specker closure property, RPT, and the
partially Cauchy property hold in it. The extra challenge
presented by this context is that realizability models seem not to
be the canonical models for these properties, and they're less
flexible to deal with. By way of illustration, most of the
topological models presented here and in \cite {RSL} were not that
difficult to come up with; in contrast, it seems completely
unclear how to concoct a realizability model for the same
purposes. As another illustration, as argued for in \cite {RSL},
topological models seem to be canonical for their purposes, in
part because ground model properties tend to persist into the
topological models, except of course for those that imply the
property purposely being falsified. For instance, the three
properties considered here are all true in the topological model
of not BD-N, as predicted. In contrast, in the realizability
models at hand, all bets are off as to which of those three hold
there. The experts do not have a clear expectation of this
outcome, and furthermore, at least in the one case tried (RPT in
extensional K$_1$ realizability), they have not been able to prove
one way or the other whether it holds. On the other hand, many of
these models are naturally occurring in and of themselves. Hence
it would be nice to know which of the principles under
consideration hold where, in order to understand these models
better, as well as the computational content of the principles
themselves.

The paper is organized as followed. Anti-Specker spaces are
discussed in sec. 2. It was shown already in \cite {RSL} that
their closure under Cartesian product does not imply BD-N; here we
see that such closure can fail under standard set theory (IZF).
The reason we work over IZF is twofold. It is the closest
constructive correlate to ZFC, the de facto gold standard in
mathematics, and it is strictly stronger than the other theories
commonly considered, such as CZF and BISH, so that an independence
result of IZF implies the same over these others. The following
two sections are about the Riemann Permutation Theorem, first that
it does not imply BD-N, and then that it can fail even under IZF.
The two sections after that show the corresponding results for the
assertion that all partially Cauchy sequences are Cauchy.

\section {Anti-Specker spaces may not be closed under products}
An anti-Specker space for our purposes is a metric space $X$ such
that, when you enlarge $X$ by adding a single point $*$ at a
distance of 1 away from every $x \in X$, then every countable
sequence through $X \cup \{*\}$ which is eventually apart from
every point of $X$ is eventually equal to $*$. (Actually, there
are various such anti-Specker properties, sometimes inequivalent,
parametrized by how the space $X$ is extended. Since we consider
here only this one version, we suppress mention of this choice in
the notation and terminology.) Anti-Speckerhood is a form of
compactness. As such, one might reasonably expect anti-Specker
spaces to be closed under Cartesian product. We produce a
topological space $T$ such that the (full) model over $T$
falsifies such closure.

\begin {definition}
Let $T$ consist of $\omega$-sequences $(z_n)$ such that finitely
many entries are pairs of real numbers $\langle x_n, y_n \rangle$
and the rest are $*$, which is taken by convention to equal
$\langle *, * \rangle$ (so every entry has both projections). We
give the topology by describing a sub-basis. An open set in the
sub-basis is given by the following information. The positive
information is a finite sequence $\alpha_n \; (n<N)$, each entry
of which is either $*$ or a pair of finite open intervals $\langle
I_n, J_n \rangle$. A sequence $(z_n)$ satisfies this positive
constraint if $z_n = *$ whenever $\alpha_n = *$ and $z_n \in I_n
\times J_n$ otherwise ($n < N$). The negative information is an
assignment to each of finitely many closed and bounded sets $C_i
(i \in I, I$ an index set) in $\mathbb{R}^2$ of a natural number
$M_i$. This negative information is satisfied by $(z_n)$ if, for
all $n > M_i, \; z_n \not \in C_i$ (where $* \not \in
\mathbb{R}^2$). (Notice that the empty set is given by the
intersection of two sub-basic open sets with incompatible positive
information.)
\end {definition}

An open set is said to be in normal form if the following
conditions hold. For one, for $m, n < N,$ either $\langle I_m, J_m
\rangle = \langle I_n, J_n \rangle$ or $\overline {I_m \times
J_m}$ and $\overline {I_n \times J_n}$ are disjoint (where
$\overline X$ is the closure of $X$). Also, $I$ is a singleton --
that is, the negative information has only one closed set -- and
that unique closed set $C$ is a (necessarily finite) rectangle.
Finally, for $m, n < N \; I_m \times J_n \subseteq C$.
(Implicitly, when reference is made to $\langle I_n, J_n \rangle$
when $\alpha_n = *$, that clause does not apply.)

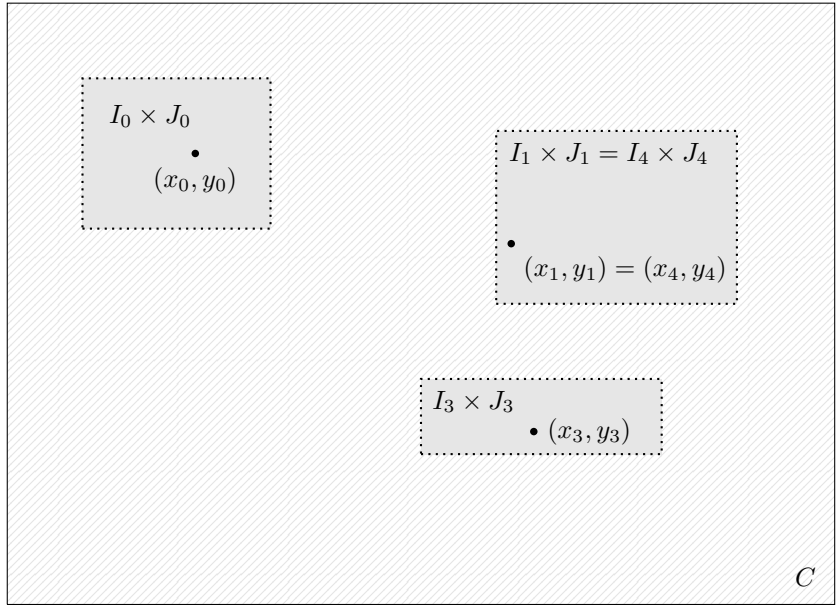
\begin{figure}[h]
\begin{center}
\usetikzlibrary{patterns}
\begin{tikzpicture}
\tikzstyle{rechteck}=[dotted,fill =black!10, thick]
  \draw[pattern=north east lines,pattern color=black!10] (0,0) --  (11,0)  node [label=above left:{$C$}] {} -- (11,8) -- (0,8) -- (0,0) ;
  \draw[rechteck] (1,5) rectangle (3.5,7);
  \draw (1.9,6.5) node {$I_{0} \times J_{0}$};
  \node [circle,fill=black,inner sep=1pt,label={below:$(x_0, y_0)$}] at (2.5,6) {};
  \draw[rechteck] (6.5,4) rectangle (9.7,6.3);
  \draw (8,6) node {$I_{1} \times J_{1} = I_{4} \times J_{4}$};
  \node [circle,fill=black,inner sep=1pt,label={below right:$(x_1, y_1)=(x_{4},y_{4})$}] at (6.7,4.8) {};
  \draw[rechteck] (5.5,2) rectangle (8.7,3);
  \draw (6.2,2.7) node {$I_{3} \times J_{3}$};
  \node [circle,fill=black,inner sep=1pt,label={right:$(x_3, y_3)$}] at (7,2.3) {};

\end{tikzpicture}
\end{center}
\caption{Open set in normal form containing the point $\left(
(x_{0},y_{0}), (x_{1},y_{1}), (x_{2},y_{2}), \dots  \right)$ with
$(x_{2},y_{2}) = \ast$.}
\end{figure}

\begin {lemma} The opens in normal form constitute a sub-basis.
\end {lemma}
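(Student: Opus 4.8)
The plan is to verify that the opens in normal form form a sub-basis for the topology already put on $T$, i.e.\ that they generate exactly that topology. One half is immediate: an open in normal form is simply a sub-basic open of the given sub-basis that happens to satisfy the three extra normal-form clauses, hence is open. The work is in the other half, for which it suffices to show that every sub-basic open $U$ of the original sub-basis is a union of normal-form opens; concretely, given $z=(z_n)\in U$ I must produce a normal-form open $V$ with $z\in V\subseteq U$. It is worth stressing that only one-sided containment is required --- $V$ may be much smaller than $U$ --- and this slack is what makes the argument work.

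So fix $U$, given by positive data $\alpha_n\ (n<N)$ and negative data assigning $M_i$ to closed bounded sets $C_i\ (i\in I)$, and fix $z\in U$. The crucial point is that $z$ has only finitely many entries different from $*$, so I can choose $K$ larger than $N$, larger than every $M_i$, and larger than every index at which $z$ is not $*$. I then define $V$ as follows. Its positive data has length $K$: at position $n<K$ put $*$ if $z_n=*$, and otherwise a small open box $\langle I'_n,J'_n\rangle$ around $z_n$ chosen so that (i) $\overline{I'_n\times J'_n}\subseteq I_n\times J_n$ whenever $n<N$, (ii) $\overline{I'_n\times J'_n}$ is disjoint from $C_i$ for every $i$ with $M_i<n$, and (iii) these boxes are literally equal at positions where the corresponding entries of $z$ agree and have disjoint closures at positions where they differ. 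Its negative data consists of the single large closed rectangle $R$ containing all the $C_i$ together with the product $(\bigcup_m I'_m)\times(\bigcup_n J'_n)$, to which I assign the threshold $M_R=K-1$.

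Then there are three routine verifications. First, $V$ is in normal form: (iii) gives the ``equal or disjoint-closures'' condition on the positive data, $R$ is by construction a single rectangle, and since $R$ is the product of two intervals containing every $I'_m$ and every $J'_n$ it contains all the cross-products $I'_m\times J'_n$. Second, $z\in V$: the positive constraint holds by construction, and the negative constraint ``$z_n\notin R$ for $n>M_R$'' holds because every entry of $z$ past index $K=M_R+1$ is $*$. Third, $V\subseteq U$: for $w\in V$ the positive data of $U$ is met since at each $n<N$ the datum of $V$ refines $\alpha_n$ by (i) (and equals $*$ when $\alpha_n=*$); and the negative data of $U$ is met because, for $n>M_i$, either $M_i<n<K$ and then $w_n\notin C_i$ by (ii) (trivially so if that entry is forced to be $*$), or $n\geq K$ and then $w_n\notin R\supseteq C_i$ from the negative constraint of $V$.

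The one step needing care is the simultaneous choice of boxes in the previous paragraph. Each box must lie inside the prescribed $I_n\times J_n$, must miss the finitely many relevant $C_i$ --- which is possible exactly because $z\in U$ forces $z_n\notin C_i$ for $n>M_i$ and $C_i$ is closed, so $z_n$ has a box-neighbourhood disjoint from $C_i$ --- and must respect the equal-or-disjoint requirement across the finitely many positions below $K$. Since these are all finitely many ``shrink around a point'' conditions they can be satisfied at once; everything else is bookkeeping, and in particular one should just note that replacing the several closed sets of $U$ by the one big rectangle $R$ does no harm, precisely because $z$ is supported below $M_R+1$.
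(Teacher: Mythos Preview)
Your proof is correct and follows essentially the same construction as the paper: extend the positive information out to cover all non-$*$ entries of $z$, shrink the boxes around those entries so that they satisfy the equal-or-disjoint-closures clause and miss the relevant $C_i$'s, and then replace the several pieces of negative information by a single large rectangle containing all the $C_i$ and all the cross-products $I'_m\times J'_n$, with threshold the length of the new positive sequence. Your version is simply more explicit than the paper's sketch---you spell out the three box conditions (i)--(iii) and verify $z\in V\subseteq U$ in detail---but the idea is the same.
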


\begin {proof}
Given $(z_n) \in O$ extend the positive information to include all
of $(z_n)$'s non-$*$ entries. Then for $\langle x_m, y_m \rangle =
\langle x_n, y_n \rangle$ shrink $I_m \times J_m$ and $I_n \times
J_n$ to be equal; for $\langle x_m, y_m \rangle \not = \langle
x_n, y_n \rangle$ shrink $I_m \times J_m$ and $I_n \times J_n$ to
satisfy the disjointness condition. Furthermore, if $n > M_i$ then
$I_n \times J_n$ must be shrunken to be disjoint from $C_i$. Then
enclose all of the $C_i$'s by one rectangle $C$, also large enough
to cover each $I_m \times J_n$, and assign to $C$ the length of
the positive sequence.
\end {proof}

Let $G$ be the generic. To help make this paper somewhat
self-contained, the basics of topological models include that the
universe of the extension consists exactly of terms, which are
sets of the form $\{ \langle O_i, \sigma_i \rangle \mid i \in I
\}$, where $O_i$ is an open set, $\sigma_i$ inductively a term,
and $I$ an index set. When each $O_i$ hereditarily is the entire
space, then the term is the canonical image $\hat{x}$ of a ground
model set $x$. The generic $G$ is the term $\{ \langle O, \hat{O}
\rangle \mid O$ an open set of $T\}$, which in this case can be
identified with a sequence $(g_n)$. Let $X$ be the set of reals
from the first components of the $g_n$'s, and $Y$ the reals from
the second components.

\begin {theorem} T $\Vdash$ ``X and Y are anti-Specker spaces,
and $X \times Y$ is not." \end {theorem}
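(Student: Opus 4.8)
The argument will run inside the forcing semantics of the topological model over $T$, using the normal-form sub-basis of Lemma 2.2. I would split the statement into three parts --- $X$ anti-Specker, $Y$ anti-Specker, $X\times Y$ not anti-Specker --- and observe at the outset that reflecting each pair $\langle x_n,y_n\rangle$ to $\langle y_n,x_n\rangle$ is a self-homeomorphism of $T$ respecting the normal-form conditions and interchanging $X$ with $Y$; the induced automorphism of the model carries the term $X$ to the term $Y$, so the second part reduces to the first, and there are really two things to do.

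The first preparatory step is to record that $T$ forces the first components of any two distinct non-$\ast$ entries of the generic to be apart, and likewise the second components: given a condition and indices $m\neq n$, one can either send an undetermined entry to $\ast$ or shrink the positive information at $m$ and at $n$ so that the first-coordinate intervals become disjoint, all within normal form; hence the conditions positively deciding ``$g_m=\ast$ or $g_n=\ast$ or $x_m,x_n$ are apart'' are dense, and symmetrically for $y$. Consequently, in the model, $X$ and $Y$ are discrete and the maps $n\mapsto x_n$, $n\mapsto y_n$ are injective. For ``$X\times Y$ is not anti-Specker'' I would then fix a bijection $\pi\colon\omega\to\omega^2$ and take the sequence with $s_n=\langle x_{\pi_1(n)},y_{\pi_2(n)}\rangle$ when $g_{\pi_1(n)}$ and $g_{\pi_2(n)}$ are both non-$\ast$, and $s_n=\ast$ otherwise; this is a sequence through $(X\times Y)\cup\{\ast\}$ that enumerates each point of $X\times Y$ at most once. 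Indeed, given a point $p$ of $X\times Y$, on each piece of a covering family $p$ is some $\langle x_k,y_{k'}\rangle$ with $g_k,g_{k'}$ non-$\ast$, and then by the preliminary normalization $s_n$ is $\ast$ or apart from $p$ for every $n\neq\pi^{-1}(k,k')$, so that piece forces ``$(s_n)$ is eventually apart from $p$''; assembling over the covering shows $T$ forces eventual apartness from every point of $X\times Y$. On the other hand no nonempty open forces ``$\exists M\,\forall n>M\;s_n=\ast$'', since any nonempty basic open leaves all sufficiently late coordinates of the generic free and admits a refinement pinning a late diagonal entry $g_a$ ($a$ large) into a box disjoint from the finitely many rectangles of its negative information, which makes $s_n\neq\ast$ for $n=\pi^{-1}(a,a)$. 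Thus $T$ forces that $(s_n)$ witnesses the failure of anti-Speckerhood of $X\times Y$.

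The remaining and more delicate part is ``$X$ is anti-Specker''. I would take a name $\sigma=(s_k)$ together with an open $U$ forcing ``$(s_k)$ is a sequence through $X\cup\{\ast\}$ eventually apart from every point of $X$'', and try to show $U\Vdash\exists M\,\forall k>M\;s_k=\ast$. Working below a basic open $Q\subseteq U$, and using that $s_k\neq\ast$ together with $s_k\in X\cup\{\ast\}$ forces $s_k\in X$, and that membership in $X$ is witnessed on a covering by a particular index, one sees that if $Q$ does not already force ``eventually $\ast$'' then below $Q$ there are arbitrarily late $k$ and refinements pinning $s_k$ to a generic point $x_{n_k}$ of $X$. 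The hypothesis --- eventual apartness from \emph{every} point of $X$ --- then forces $\sigma$ to avoid each such $x_{n_k}$ past some stage; since the model carries no enumeration of $\{n:g_n\neq\ast\}$, $\sigma$ cannot genuinely thread injectively through distinct points of $X$ forever, and so it must be eventually $\ast$.

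Making that last step precise is the main obstacle. The heart of the matter is to convert ``$(s_k)$ is a sequence through $X\cup\{\ast\}$, eventually apart from every point of $X$, and not eventually $\ast$'' into an outright contradiction: from the failure of eventual-$\ast$-ness one must produce a \emph{single} point of $X$ that $\sigma$ is not eventually apart from --- equivalently, show such a $\sigma$ is forced to return to some fixed $x_n$ cofinally often --- rather than merely an ever-growing list of points each escaped from eventually. This is precisely where the shape of the normal-form topology has to be used: the disjoint-closure condition (which makes $X$ discrete and underlies the preliminary normalization above) together with the negative information --- which governs how the generic points of $X$ are distributed, and which is exactly what blocks the Specker sequence built above for $X\times Y$ from being imitated for $X$ alone --- must be combined to locate, for a given name $\sigma$, a stage past which the generic cannot hand $\sigma$ a fresh point of $X$ to escape to.
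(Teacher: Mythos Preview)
Your proposal has two genuine gaps.

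First, your preparatory claim that $T$ forces the first components of any two distinct non-$*$ entries of the generic to be apart is false. Consider the point $(z_n)$ with $z_0 = (1,2)$, $z_1 = (1,3)$, and $z_n = *$ otherwise: every neighborhood of it forces the first components of $g_0$ and $g_1$ both to be close to $1$, and none forces them apart. The normal-form condition only requires the \emph{boxes} $I_m \times J_m$ and $I_n \times J_n$ to have disjoint closures in $\mathbb{R}^2$, not that their first projections be disjoint. So neither the injectivity of $n \mapsto x_n$ nor the discreteness of $X$ is available, and your argument for ``$X \times Y$ not anti-Specker'' --- which uses discreteness to verify eventual apartness of your $\pi$-indexed sequence --- does not go through. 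The paper avoids this entirely by taking the generic $(g_n)$ itself as the witness: for any $(x,y) \in X \times Y$, some normal open forces $x \in I_m$ and $y \in J_n$, and the normal-form requirement $I_m \times J_n \subseteq C$ together with the negative information then forces $g_k$ bounded away from $(x,y)$ for all $k$ beyond the length of the positive information.

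Second, and more seriously, your argument that $X$ is anti-Specker is, as you yourself acknowledge, incomplete. The key idea you are missing is a connectedness argument. The paper first shrinks to a normal open $U$ (with positive information of length $N$ and a single rectangle $C$ as negative information) forcing, for all $k$ beyond some $K$, that $a_k$ is apart not just from each $x_n$ named in the positive information but from an entire open interval (a ``forbidden zone'') containing $\overline{I_n}$. If nonetheless some $V \subseteq U$ forces $a_k = x_l$ for some $k > K$, then necessarily $l > N$, so the $l$-th coordinate is unconstrained by $U$'s positive information. One then picks $v \in V$ and moves $v_l$ continuously along a path in $\mathbb{R}^2$ that stays outside $C$ and terminates at a point whose first coordinate lies in a forbidden zone; all the resulting sequences remain in $U$. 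Along this path the sets of parameters at which some neighborhood forces $a_k = *$, respectively $a_k \neq *$, are both open; by connectedness of the path the latter (nonempty at the start) is the whole path, contradicting the forbidden-zone constraint at the endpoint. It is precisely the ability to route around a bounded rectangle in the \emph{plane} that makes this work for $X$, and your proposal does not arrive at this mechanism.
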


\begin {proof}
Clearly, $T \Vdash ``(g_n)$ is a sequence through $X \times Y \cup
\{*\}$." By considering the normal opens, $(g_n)$ is eventually
apart from each point in $X \times Y$. In greater detail, suppose
$(z_n) \in O \Vdash (x,y) \in X \times Y.$ Then some neighborhood
of $(z_n)$ forces $x$ to be in some $I_m$ and $y$ to be in some
$J_n$. Let $U$ be a normal open subset of that neighborhood
containing $(z_n)$. If $U$'s positive information has length $N$,
then $U \Vdash ``$Beyond $N \; (g_n)$ is apart from $(x,y)$."

Also, no open set forces $(g_n)$ eventually to be $*$, because the
closed sets in the negative information are finite. That is, given
any open set $O$ and natural number $k$, there is member of $O$
with a non-$*$ entry beyond slot $k$.

Hence $(g_n)$ witnesses that $X \times Y$ is not an anti-Specker
space.

All that remains to show is that $X$ and $Y$ are anti-Specker
spaces. We will show this for $X$, the case for $Y$ being
symmetric.

To this end, suppose $O \Vdash ``(a_n)$ is a sequence through $X
\cup \{*\}$ eventually apart from each point in $X$." For $(z_n)
\in O$ we must find a neighborhood of $(z_n)$ forcing a place
beyond which $(a_n)$ is always $*$. First extend (i.e. shrink) $O$
so the positive information $\alpha$ contains all of $(z_n)$'s
non-$*$ entries. Then we claim we can extend again to force an
integer $K$ beyond which (i.e. for $k > K$) $a_k$ is apart from
each $x_n$ in $(z_n)$'s non-$*$ entries, all the while keeping
$(z_n)$ in the open set. That is, for each $\alpha_n$ of the form
$\langle I_n, J_n \rangle$, $a_k$ is forced to be at least some
fixed rational distance $r_n$ away from the real approximated by
$I_n$. To do this, iteratively extend the open set to have this
property for each $\langle I_n, J_n \rangle$ individually.

Then extend again by shrinking $I_n$ (to an interval we will still
call $I_n$, recycling notation) so that $I_n$ has length less than
$r_n$. This forces $a_k$ to be apart from the entire interval
$I_n$; even more, $a_k$ is forced not to be in some open interval
containing $I_n$'s endpoints, some extension of $I_n$ both upwards
and downwards. We call such a lengthened interval a forbidden
zone.

Finally, extend yet again to an open set $U \ni (z_n)$ in normal
form, with positive information given by $\alpha$ of length $N$
and negative information given by $C$.

\begin{figure}[t]
\begin{center}
\usetikzlibrary{patterns}
\begin{tikzpicture}
\tikzstyle{rechteck}=[dotted,fill =black!10, thick]
  \draw[pattern=north east lines,pattern color=black!10] (0,0) --  (11,0)  node [label=above left:{$C$}] {} -- (11,8) -- (0,8) -- (0,0) ;
  \draw[rechteck] (1,5) rectangle (3.5,7);
  \draw (1.9,6.5) node {$I_{0} \times J_{0}$};
  \node [circle,fill=black,inner sep=1pt,label={below:$(x_0, y_0)$}] at (2.5,6) {};
  \draw[rechteck] (6.5,4) rectangle (9.7,6.3);
  \draw (8,6) node {$I_{1} \times J_{1} = I_{4} \times J_{4}$};
  \node [circle,fill=black,inner sep=1pt,label={below right:$(x_1, y_1)=(x_{4},y_{4})$}] at (6.7,4.8) {};
  \draw[rechteck] (5.5,2) rectangle (8.7,3);
  \draw (6.2,2.7) node {$I_{3} \times J_{3}$};
  \node [circle,fill=black,inner sep=1pt,label={right:$(x_3, y_3)$}] at (7,2.3) {};

  \node at (2,9) {forbidden zone};
  \node at (6.5,9) {forbidden zone};
  \node [circle,fill=black,inner sep=1pt,label={right:$(x, y)$}] (xy) at (2,8.5) {};
   \node [circle,fill=black,inner sep=1pt,label={below:$v_{\ell}$}] (vl) at (-0.5,5) {};
  \draw (0.5, 10) -- (0.5,-1);
  \draw (4, 10) -- (4,-1);
  \draw (5, 10) -- (5,-1);
  \draw (10, 10) -- (10,-1);
  \pgfsetcornersarced{\pgfpoint{17pt}{17pt}}
  \draw  (xy) --  (-0.5,8.5) -- (vl);
  \pgfsetcornersarced{\pgfpointorigin}

\end{tikzpicture}
\end{center}
\caption{The same open set as before, with forbidden zones and
path P.}
\end{figure}
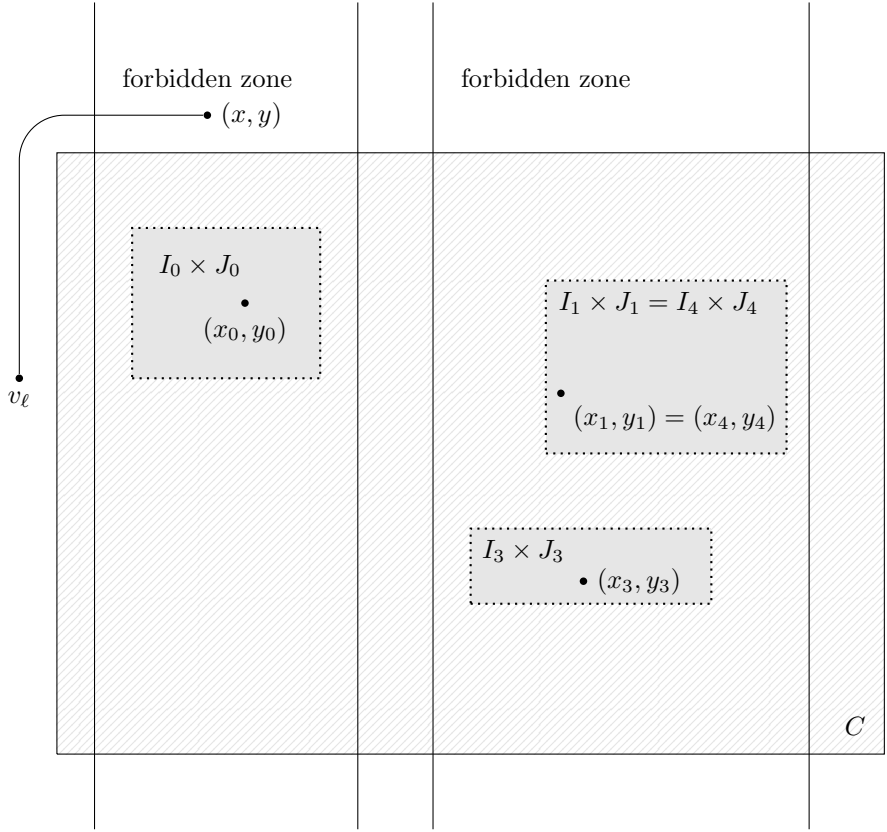

The claim is that $U \Vdash ``$For $k > K \; a_k = *"$. If not,
for some fixed $k > K$ and $l$ let some extension $V$ of $U$ force
$``a_k = x_l"$. Call $V$'s positive information $\beta$. Notice
that, by the construction above, $l > N$. That means we can change
$\beta_l$ without violating $U$'s positive information. Pick some
$v = (v_n) \in V$. Let $P$ be a path in $\mathbb{R}^2$ starting at
$v_l$, ending at some $(x,y)$ with $x$ in some forbidden zone, and
avoiding $C$; this is possible, because $C$ is just a finite
rectangle. For each $p \in P$ let $v(p)$ be identical with $v$
except that $v_l$ is replaced by $p$. Notice that $v(p) \in U$,
because by avoiding $C$ we're also not violating $U$'s negative
information. So around each $v(p)$ is an open subset of $U$
forcing either that $a_k$ is $*$ or that it's not. Let $P_*$ be
$\{ p \mid$ some neighborhood of $v(p)$ forces $``a_k = * \}$ and
$P_x$ be $\{ p \mid$ some neighborhood of $v(p)$ forces $``a_k
\not =* \}$. Since positive information is given by open sets,
whatever some neighborhood of some $v(p)$ forces, the same
neighborhood will force the same thing for all $v(q)$ where $q$ is
in some neighborhood of $p$. In other words, both $P_*$ and $P_x$
are open. Since paths in $\mathbb{R}^2$ are connected, one of
those is empty and the other is $P$. Since $v_l \in P_x, \; P_x =
P$. Recall that $P$ ends at some $(x,y)$ with $x$ in a forbidden
zone. This contradicts the choice of $K$, and so completes the
proof.
\end {proof}

\section {RPT does not imply BD-N}
At some point in this paper, it should be stated what the Riemann
Permutation Theorem actually is.

\begin {theorem} (Riemann) If every permutation of a series of
real numbers converges, then the series converges absolutely.
\end {theorem}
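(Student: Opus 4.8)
The plan is to prove the contrapositive: if $\sum_n a_n$ is not absolutely convergent, then it admits a rearrangement that does not converge. So suppose $\sum_n |a_n|$ diverges, and write $p_n = \tfrac12(|a_n| + a_n) \ge 0$ and $q_n = \tfrac12(|a_n| - a_n) \ge 0$, so that $a_n = p_n - q_n$ and $|a_n| = p_n + q_n$. First I would run the standard dichotomy. Since $\sum_n(p_n + q_n)$ diverges, at least one of $\sum_n p_n$, $\sum_n q_n$ diverges; if exactly one of them does, then $\sum_n a_n = \sum_n(p_n - q_n)$ itself diverges, so the identity permutation is already a non-convergent rearrangement and we are done. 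Hence we may assume both $\sum_n p_n$ and $\sum_n q_n$ diverge; we may also assume $\sum_n a_n$ converges (otherwise, again, the identity permutation is a witness), whence $a_n \to 0$.

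Now comes the greedy rearrangement. Enumerate the strictly positive terms as $a_{i_0}, a_{i_1}, \dots$ and the remaining terms as $a_{j_0}, a_{j_1}, \dots$, both in order of increasing index; the two divergence assumptions say exactly that $\sum_k a_{i_k} = +\infty$ and $\sum_k |a_{j_k}| = +\infty$. Build the permutation in stages $m = 1, 2, \dots$: at stage $m$, append the next few still-unused positive terms (in their original order) until the running partial sum first exceeds $m$, then append one still-unused non-positive term. Each stage closes after finitely many steps because at every moment the sum of the still-unused positive terms is still $+\infty$; every positive term is eventually used, since infinitely many are used and they are taken in their original order, and likewise every non-positive term is used, one per stage; so this is a bijection $\N \to \N$. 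Along the resulting rearranged series the partial sums exceed $m$ for every $m$, hence are unbounded, hence the series does not converge. This proves the contrapositive.

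The one genuinely delicate point — and, over the constructive set theory of this paper, the real obstruction — is the very first move: the trichotomy $a_n > 0 \vee a_n = 0 \vee a_n < 0$ and the splitting of a divergent $\sum_n|a_n|$ into a divergent positive and a divergent negative part are not decidable case distinctions, and neither is the test ``the partial sum first exceeds $m$''. The way to make the argument honest is to hand the construction only a modulus of divergence for $\sum_n|a_n|$ together with (as hypothesis) a limit and a modulus for $\sum_n a_n$ under the identity permutation, and then to extract the finite blocks of indices to promote by bounded search, using the convergence of $\sum_n a_n$ to keep the accumulated negative contribution small enough that each partial sum is forced strictly above $m$. The combinatorial content is exactly Riemann's; essentially all the work is in arranging that bookkeeping so that only legitimate searches are performed.
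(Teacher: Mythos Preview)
The paper does not prove this statement at all: it is stated as Riemann's classical theorem, with a pointer to \cite{BeB3, BBDS} for the constructive analysis (in particular, for the implication BD-$\mathbb{N} \Rightarrow$ RPT). So there is no ``paper's own proof'' to compare against; what follows is an assessment of your argument on its own.

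Your first three paragraphs are the standard classical proof of Riemann's theorem and are fine \emph{classically}. The difficulty is entirely in your last paragraph, where you propose to salvage the argument over the constructive base theory of this paper. That repair cannot be carried out, and the paper itself tells you why: Section~4 builds a topological model of IZF in which RPT fails. Hence no amount of bookkeeping will turn Riemann's construction into an IZF proof; any purported constructive proof must use some extra principle (BD-$\mathbb{N}$ suffices, per \cite{BBDS}).

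Concretely, several of the moves you label ``delicate'' are in fact unavailable. The contrapositive strategy already stumbles: constructively, ``$\sum_n |a_n|$ does not converge'' does not hand you a modulus of divergence, so the bounded searches you want to run need not terminate. The dichotomy ``at least one of $\sum p_n$, $\sum q_n$ diverges, and if exactly one does then \dots'' is a nonconstructive case split. And assuming ``as hypothesis'' a limit and modulus for the identity permutation changes the theorem: RPT quantifies over \emph{all} permutations in the hypothesis, not just the identity, and it is precisely the uniformity hidden in that universal hypothesis that BD-$\mathbb{N}$ is used to exploit in the known constructive proofs. So your sketch is the right classical story, but the constructive closing paragraph promises something the paper proves is impossible.
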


For a constructive analysis of the issues involved with
convergence of series, see \cite {BeB3, BBDS}. These include a
proof that BD-N implies RPT, as well as that absolute convergence
follows from merely having a bound on the partial sums of the
absolute values, which we use implicitly below.

In \cite {RSL} a topological model falsifying BD-N is presented,
as well as a proof that, in that model, the anti-Specker spaces
are closed under products. It is predictable that the proofs that
other properties slightly weaker than BD-N hold in the same model
would be very similar, and also true. To make the current paper
somewhat self-contained we will describe the underlying
topological space again; the argument afterwards that RPT holds
should seem familiar to anyone familiar with the anti-Specker
closure section from \cite {RSL}.

The points in $T$ be the functions $f$ from $\omega$ to $\omega$
with finite range, that is, enumerations of finite sets. A basic
open set $p$ is (either $\emptyset$ or) given by an unbounded
sequence $g_p$ of integers, with a designated integer ${\rm
stem}(p)$, beyond which $g_p$ is non-decreasing. $f \in p$ if
$f(n) = g_p(n)$ for $n < {\rm stem}(p)$ and $f(n) \leq g_p(n)$
otherwise. Notice that $p \cap q$ is either empty (if $g_p$ and
$g_q$ through their
\begin{figure} [h]
\begin{center}
\usetikzlibrary{patterns,arrows}

\begin{tikzpicture}
\tikzset{
>=stealth', axis/.style={<->}, connection/.style={thick, dotted},
}
    \coordinate (y) at (0,4.5); \coordinate (x) at (9,0);
    \draw[<->] (y) node[above] {$\omega$} -- (0,0) --  (x) node[right] {$\omega$}; \path;
\foreach \point in
{(0.5,1),(1,2),(1.5,2),(2,2.5),(2.5,1.5),(3,2),(3.5,2.5)}
    {\filldraw [black]  \point circle (2pt);}
\node at (2,2.8) [right] {$g_{p}$};
\draw[connection]  (3.5,4)--(3.5,0) node[below]
{$\mathit{\mathrm{stem}(p)}$};
\foreach \x in {4,4.5,5,5.5,6}
    {
    \draw[pattern=north east lines,pattern color=black!10] (\x-0.25,0) rectangle (\x+0.25,3);
    \filldraw [black]  (\x,3) circle (2pt);
        }
\foreach \x in {6.5,7,7.5,8}
    {
    \draw[pattern=north east lines,pattern color=black!10] (\x-0.25,0) rectangle (\x+0.25,3.5);
    \filldraw [black]  (\x,3.5) circle (2pt);
    }
\node at (8.5,2) [right] {\dots};
\end{tikzpicture}
\end{center}
\caption{An artist's impression of an open set $p$.}
\end{figure}
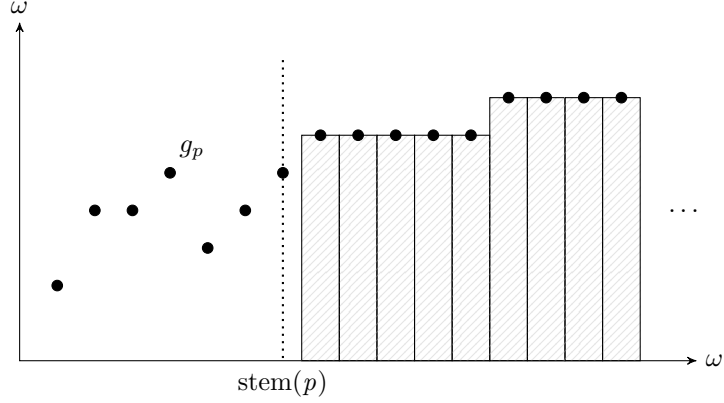
stems are incompatible) or is given by taking the larger of the
two stems, the function up to that stem from the condition with
the larger stem, and the pointwise minimum beyond that. Hence
these open sets do form a basis. It is sometimes easier to assume
that $g_p({\rm stem} (p)) \geq \max\{g_p(i) \mid i < {\rm
stem}(p)\}$. The intuition is that once a certain value has been
achieved there's nothing to be gained anymore by trying to
restrict future terms from being that big. It is not hard to see
that that additional restriction does not change the topology. So
whenever more convenient, a basic open set can be taken to be of
this more restrictive form.

\begin {theorem} $T \Vdash$ RPT.
\end {theorem}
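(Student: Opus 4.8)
The plan is to imitate the proof, given in \cite{RSL}, that in this same model the anti-Specker spaces are closed under products. Let $(a_n)$ be (a term for) a series of real numbers, and suppose $O \Vdash ``$every permutation of $\sum_n a_n$ converges$"$; the goal is $O \Vdash ``\sum_n a_n$ converges absolutely$"$. By the result from \cite{BBDS} quoted above --- that absolute convergence already follows from a bound on the partial sums of the absolute values --- it suffices to show that $O$ is covered by open sets each of which forces some natural-number bound on $\sum_{n \le m}|a_n|$. Equivalently, letting $U$ be the union of all opens that force such a bound, I must show $O \subseteq U$; so I fix a point $f \in O$ and aim to put $f$ into $U$, i.e.\ to find a basic open $W$ with $f \in W \subseteq O$ and a natural number $B$ such that $W \Vdash ``\forall m\, \sum_{n \le m}|a_n| \le B"$.

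Suppose no such $W, B$ exist, so that arbitrarily close to $f$ (and below $O$) there are conditions forcing the absolute partial sums to exceed any prescribed value. I would use this, together with the generic $g$ (here a function $\omega \to \omega$) and the special form of the basic opens of $T$, to manufacture a permutation $\pi$ of $\N$ that a neighborhood of $f$ is forced to rearrange into a divergent series --- contradicting $O \Vdash ``$every permutation converges$"$. The construction runs in stages. At stage $j$ one appends to $\pi$ the least still-unused index (keeping $\pi$ a bijection), and then, using that no neighborhood of $f$ bounds the absolute partial sums, one locates a finite block $E_j$ of fresh indices and refines the current condition --- keeping $f$ in it --- so as to force $\sum_{n \in E_j}|a_n|$ large relative to everything committed so far; rational approximations to the $a_n$, essentially the only spot where constructive care with the reals is called for, then let one split $E_j$ by sign and list first its positive part, so that a consecutive stretch of $\pi$ lying beyond position $j$ is forced to have sum bigger than $1$. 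The point of working with $T$ rather than an arbitrary space is that its basic opens are pinned down by a finite stem together with eventual domination by an unbounded sequence, and --- as in \cite{RSL} --- it is this rigidity that lets the successive refinements be amalgamated into a single limit condition $W^*$ which still contains $f$ and still contains an open set. That condition forces, for every $M$, that the rearranged partial sums change by more than $1$ somewhere past $M$; hence $W^* \Vdash ``\sum_n a_{\pi(n)}$ is not Cauchy$"$, and so not convergent, the desired contradiction.

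I expect the genuine difficulty to be exactly this amalgamation step, and in particular arranging that the ``jumps'' forced along the way --- each block $E_j$ and the resulting consecutive stretch of $\pi$ of sum $> 1$ beyond position $j$ --- are still forced by the limit condition $W^*$, rather than evaporating once that condition is actually pinned down. The subtlety is that the ``bad'' conditions near $f$ that give large absolute partial sums need not themselves contain $f$, so the whole construction has to be routed through conditions containing $f$, which is possible precisely because of the combinatorics (finite stems, eventual domination) of $T$. Morally, what is being shown is that, although $T$ falsifies BD-N, the pseudobounded sequences that arise internally from a series all of whose permutations converge are nevertheless bounded in this model; the proof of the anti-Specker closure property in \cite{RSL} does the analogous thing, and the present argument should track it closely. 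The sign-splitting of the blocks and the verification that $\pi$ is a bijection are routine by comparison and would not be belabored.
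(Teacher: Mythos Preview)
Your overall strategy --- reduce to bounding the partial sums of $\sum|a_n|$, and if no neighborhood of $f$ forces such a bound then manufacture a permutation $\pi$ for which $\sum a_{\pi(n)}$ diverges --- is the paper's strategy, and your identification of the amalgamation step as the crux is on target. The paper's decomposition differs from yours in a way worth noting, however. Rather than trying to route the entire construction through conditions containing $f$ (which, as you say, is awkward because the conditions forcing large absolute sums need not contain $f$), the paper first passes once and for all to a condition $q \ni f$ in which each $a_n$ is \emph{determined} by finite initial segments (a lemma lifted verbatim from \cite{RSL}), and thereafter works purely combinatorially with the tree $Tr_q$ of such segments. The bad-permutation construction --- your stages-and-blocks argument, sign-splitting included --- is then carried out not at $f$ but along a bad path through a \emph{bounded} subtree of $Tr_q$; boundedness is exactly what guarantees that path lies in $T$. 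This gives: every bounded subtree admits a uniform bound $B$. The amalgamation is then a separate step: one shows that enlarging the bounded tree by allowing one further value in the range beyond some point does not, in the limit, increase the least such $B$ (this uses a second bad-permutation construction), and iterating produces an open set containing $f$ forcing the bound $B_1 + 1$. Your single-pass version may be workable, but the paper's two-level structure --- bounded trees first, amalgamate afterwards --- cleanly sidesteps the very difficulty you flag about keeping $f$ inside the refined conditions.
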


\begin {proof}
Suppose $f \in p \Vdash ``$For every permutation $\sigma$ the
series $(a_{\sigma(n)})$ converges." It suffices to find a
neighborhood of $f$ forcing an upper bound for $\Sigma \left| a_n
\right|$. We assume as usual that $p$ is basic open and that
$g_p$(stem($p)) \geq \sup(\mathrm{rng}(f))$. So it suffices to
extend $p$ to $r$ forcing a bound for $\Sigma \left| a_n \right|$,
without altering the stem or the value $g_p$(stem($p))$ (i.e.
stem($p$) = stem($r$) and $g_p = g_r$ at their common stem), as
$f$ will then be in $r$. We can also assume that each $a_n$ is
rational, as $a_n$ could be replaced by a rational number (using
Countable Choice, which holds in this model \cite {RSL})
sufficiently close that convergence will not be affected.

\begin{definition} A finite sequence of integers $\sigma$ of length at
least ${\rm stem}(p)$ is {\bf compatible} with $p$ if for all $i <
{\rm stem}(p) \; \sigma(i) = g_p(i)$ and for all $i$ with ${\rm
stem}(p) \leq i < length(\sigma) \; \sigma(i) \leq g_p(i)$. For
$\sigma$ compatible with $p, \; p \upharpoonright \sigma$ is the
open set $q \subseteq p$ such that ${\rm stem}(q) =
length(\sigma),$ for $i < {\rm stem}(q) \; g_q(i) = \sigma(i),$
and otherwise $g_q(i) = g_p(i)$.
\end {definition}

The following lemma is analogous in statement and proof to lemma
3.3 from \cite {RSL}, the proof of which was an extrapolation of
some lemmas from an earlier section, which themselves were just
extensions of the basic lemma about this model. All of which is
meant to explain why the proof will not be repeated here.

\begin {lemma}
There is an open set $q \subseteq p$, with ${\rm stem}(q) = {\rm
stem}(p)$ and $g_q({\rm stem}(q)) = g_p({\rm stem}(p)),$ which
determines the values of $a_n$ in the following sense: for every
$n \in \mathbb{N}$ there is a length $i_n$ (increasing as a
function of $n$) such that, for all $\sigma$ of length $i_n$
compatible with $q$, $q \upharpoonright \sigma$ forces a
(rational) value for $a_n$, say $r_\sigma$.
\end {lemma}

Let $q$ be as in the lemma. The members of $q$ naturally form a
tree $Tr_q$: the nodes are those finite sequences compatible with
$q$, and the members of $q$ are those paths through the tree with
bounded range. At height $j \geq {\rm stem}(q)$ of $Tr_q$, the
amount of branching is $g_q(j)+1$. The nodes at height $i_n$
determine the value of $a_n$. We will have use for subsets of $q$
the members of which have ranges that are uniformly bounded. (Such
subsets are, of course, not open.) These subsets can be given as
the set of paths through a subtree $Tr$ of $Tr_q$ with a fixed
bound on the ranges of its nodes, as follows.

\begin {definition} A tree $Tr \subseteq Tr_q$ is {\bf bounded} if
there is a $J$ such that for all $\sigma \in Tr$ and $j <
length(\sigma) \; \sigma(j) < J.$
\end {definition}

The following is the analogue of \cite {RSL}'s lemma 3.5.

\begin {lemma} Let $Tr \subseteq Tr_q$ be bounded. Then there is a
bound $B$ in the sense that, for all $\sigma \in Tr$ of length
some $i_n$, $\Sigma_{m \leq n} \left| r_{\sigma \upharpoonright
i_m} \right| \leq B$.
\end {lemma}

\begin {proof}
Say that $\tau \in Tr$ is good if the conclusion of the lemma is
satisfied for $Tr \upharpoonright \tau$ (i.e. those nodes in $Tr$
extending $\tau$). Notice that if every immediate extension of
$\tau$ is good then $\tau$ itself is good, by taking the maximum
of the bounds witnessing the goodness of $\tau$'s extensions. So
if the root of $Tr$ is bad (i.e. not good) then there is a path
$f$ through $Tr$ consisting of all bad nodes. Because $Tr$ is
bounded, $f \in T$.

Define the permutation $\sigma$ as follows. At stage $n$, we have
inductively a permutation $\sigma_n$ of a natural number $l_n$
(with $l_0 = 0$). By the choice of $f$, there is an extension
$\tau$, with length some $i_k$, of $f \upharpoonright i_{l_n}$,
such that $\Sigma_{l_n < m \leq k} \left| r_{\tau \upharpoonright
i_m} \right|$ is at least 2. That can be only if either the sum of
the positive values of $r_{\tau \upharpoonright i_m}$ is at least
1, or that of the negative such values is at least 1. Without loss
of generality, suppose the former. Let $\sigma_{n+1}$ extend
$\sigma_n$ by first listing all of the $m$'s such that $r_{\tau
\upharpoonright i_m}$ is positive, and then listing all of the
other $m$'s which are at most $k$. As this $\sigma_{n+1}$ is a
permutation of $\{0, 1, ... , k\}$, the inductive construction can
continue. Letting $\sigma$ be $\bigcup_n \; \sigma_n$, no
neighborhood of $f$ can force that $\Sigma a_{\sigma(n)}$
converges.

\end {proof}

To complete the proof, let $Tr_1$ be the subtree of $Tr_q$ of all
nodes with values less than or equal to $I := g_q$(stem$(q))$.
Apply the lemma with $Tr_1$ for $Tr$ to get the least such upper
bound $B_1$. Now let $Tr_j \subseteq Tr_q$ extend $Tr_1$ by
allowing nodes that may take on the value $I+1$ at positions
beyond $j$. The lemma applied to $Tr_j$ produces a least upper
bound $B_j$. Notice that for $j<k$ we have $Tr_j \supseteq Tr_k
\supseteq Tr_1$, and so $B_j \geq B_k \geq B_1$. Let $B_\infty$ be
$\lim_j B_j$.

We claim that $B_\infty = B_1$. If not, then $\epsilon := B_\infty
- B_1 > 0$, and for all $j$ there are nodes $\tau \in Tr_1$ and
$\rho \in Tr_j$ extending $\tau$ such that, summing over the
$i_m$'s between the lengths of $\tau$ and $\rho$, $\Sigma \left|
r_{\rho \upharpoonright i_m} \right|$ is as close to $\epsilon$ as
you want. By a construction as in the previous lemma, a
permutation $\sigma$ could then be built with no condition forcing
$\Sigma_n a_{\sigma(n)}$ to converge. Hence $B_\infty = B_1$.
Choose $j$ so that $B_j$ is within 1/2 of $B_1$. Let $Tr_2$ be
$Tr_j$ and $B_2$ be $B_j$.

Continuing inductively, given $Tr_s$, build $Tr_{s+1}$ which
allows nodes to take on the value $I+s$ past a certain point and
has a lemma-induced least upper bound $B_{s+1}$ no greater than
2$^{-s}$ more than $B_s$. Let $Tr_\infty$ be $\bigcup_s Tr_s$.
$Tr_\infty$ induces an open set which forces $\Sigma_n \left| a_n
\right|$ to be bounded by $B_1 + 1.$ \end {proof}

\section {RPT may fail}
We now present a topological model in which RPT is false. First we
define the underlying space $T$, which not surprisingly will
involve reference to permutations. By way of notation, for
$\sigma$ a permutation of $\omega$, we think of $\sigma(n)$ as the
integer in the $n^{th}$ slot. So applying $\sigma$ to $0, 1, 2,
...$ would produce $\sigma(0), \sigma(1), \sigma(2), ...$.
Applying $\sigma$ to $(a_n)$ produces the sequence $a_{\sigma(0)},
a_{\sigma(1)}, ...$, for which we use the notation
$(a_{\sigma(n)})$.

\begin {definition} Let $T$ be the set of sequences $(a_n)$ which
are eventually 0 and which sum to 0.

An open set $O$ is given in part by a finite sequence $I_n (n<N)$
of intervals from $\mathbb{R}$, thought of as approximations to an
initial segment of $(a_n)$; that is, in order for $(a_n)$ to be in
$O$, it is necessary that $a_n \in I_n.$ Also, finitely many
permutation $\sigma$ are given. Each such $\sigma$ is associated
with finitely many pairs $\epsilon, M$, with $\epsilon > 0$ and $M
\in \mathbb{N}$. For $(a_n)$ to be in $O$, it must also be the
case that the partial sums $\Sigma_{n=0}^m a_{\sigma(n)}$ ($m
> M$) are less than $\epsilon$ in absolute value. In words, after
permuting $(a_n)$ by $\sigma$, the series must have converged to
within $\epsilon$ by $M$.
\end {definition}

\begin {theorem} $T \Vdash \neg$RPT.
\end {theorem}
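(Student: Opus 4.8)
The plan is to exhibit a generic sequence $(g_n)$ in the topological model over $T$ that is a counterexample to RPT: every permutation of $(g_n)$ converges, but the series $\sum |g_n|$ is unbounded. As usual, $(g_n)$ is read off from the generic $G$, so that the $n$-th term $g_n$ is the real approximated by the $n$-th-coordinate intervals $I_n$ appearing in the conditions of $G$. First I would check that $T \Vdash$ ``$(g_n)$ is a sequence of reals eventually $0$ that sums to $0$'', which is immediate from the definition of the points of $T$ and the fact that the $I_n$ may be taken arbitrarily small (so $g_n$ is a genuine real and the eventual-$0$/sum-$0$ constraints are inherited from every point in every condition).

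Next, the easy half: $T \Vdash$ ``every permutation $\sigma$ of $(g_n)$ converges.'' Here the point is that the negative-information slot of a condition is exactly designed to force convergence after permutation. Suppose $O \Vdash$ ``$\sigma$ is a permutation'' — more carefully, $O$ forces $\dot\sigma$ to be some permutation, and since permutations of $\omega$ are coded by ground-model-like data one reduces to the case where $\sigma$ is an honest ground-model permutation on a dense set of conditions. Given $O$ and given $\epsilon>0$, I would extend $O$ by adjoining to $\sigma$ a new pair $(\epsilon, M)$ for a suitably large $M$; the only thing to verify is that this extension is nonempty, i.e. that some eventually-$0$ sum-$0$ sequence lying in the finitely many intervals $I_n$ of $O$ and already respecting $O$'s other permutation constraints also has $|\sum_{n=0}^{m} a_{\sigma(n)}| < \epsilon$ for all $m > M$ — and this is clear because any eventually-$0$ sequence has, for each permutation, partial sums that are eventually constant (equal to the total sum, which we may arrange to be $0$), so for $M$ past the last nonzero slot the constraint is automatically satisfied. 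Running through the finitely many already-present permutations simultaneously and then adding the new constraint gives a nonempty open subset of $O$ forcing ``$\sum a_{\sigma(n)}$ is Cauchy'', hence convergent. This shows every permutation converges.

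The hard half — and the main obstacle — is showing $T \Vdash$ ``$\sum |g_n|$ is not bounded'', equivalently that no condition $O$ forces a bound $\sum_{n} |g_n| \le B$ for any real $B$. Fix $O$ with intervals $I_0,\dots,I_{N-1}$ and permutations $\sigma_1,\dots,\sigma_k$ carrying convergence data $(\epsilon_j, M_j)$. I must produce $(a_n) \in O$ — equivalently extend $O$ to a nonempty condition — in which $\sum |a_n|$ exceeds $B$. The idea is to insert, far out beyond slot $N$ and beyond all the $M_j$, a long block of the ``Riemann rearrangement'' shape: many pairs $+c, -c$ placed in positions that each listed permutation $\sigma_j$ visits in close-together order, so that all the finitely many partial-sum constraints $(\epsilon_j,M_j)$ continue to be met (the $+c$ and $-c$ cancel within each $\sigma_j$'s window up to tolerance $\epsilon_j$), while the total absolute mass $\sum |a_n|$ added is as large as we like. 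The delicate point is arranging a single placement of these $\pm c$ entries that is simultaneously ``balanced'' for all $k$ permutations and for the partial-sum thresholds $M_j$; this is a finite combinatorial scheduling problem — since only finitely many $(\sigma_j, \epsilon_j, M_j)$ are constrained, one can choose the block to start so far out that each $\sigma_j$ has already ``used up'' all its small indices, then interleave $\pm c$ pairs consecutively in $\sigma_j$-order for one $j$ at a time, perturbing as needed, keeping each running partial sum within $\epsilon_j/2$ say. Finally one caps the sequence off with finitely many more terms to restore total sum $0$ and eventual vanishing, verifies the resulting $(a_n)$ lies in (a nonempty extension of) $O$, and notes $\sum|a_n| > B$. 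Since $O$ and $B$ were arbitrary, no condition forces $\sum|g_n|$ bounded, so $T \Vdash \neg$RPT. The bulk of the write-up will be this scheduling construction and the bookkeeping that the finitely many convergence constraints survive it.
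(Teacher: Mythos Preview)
Your overall strategy matches the paper's: the generic sequence $(g_n)$ witnesses $\neg$RPT because every permutation of it converges while $\sum|g_n|$ is unbounded. However, you have the relative difficulty of the two halves inverted, and there is a genuine gap in the half you call easy.

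A minor point first: $T$ does \emph{not} force that $(g_n)$ is eventually $0$. The points of $T$ are eventually $0$, but no open condition forces $g_n=0$ for all large $n$; indeed, that would contradict the absolute-divergence half. This side claim is false, though harmless to the argument.

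The real gap is your reduction of arbitrary permutations to ground-model ones. You write that ``since permutations of $\omega$ are coded by ground-model-like data one reduces to the case where $\sigma$ is an honest ground-model permutation on a dense set of conditions.'' This is not automatic in topological models and is exactly what needs to be proved here. The paper supplies a specific argument: given $(a_n),(b_n)\in O$, consider the path $(c_n)^t := t(b_n)+(1-t)(a_n)$ for $t\in[0,1]$. Because every constraint defining $O$ (both the interval conditions $a_n\in I_n$ and the partial-sum bounds $|\sum_{n\le m}a_{\sigma(n)}|<\epsilon$) is linear, each $(c_n)^t$ lies in $O$. The set of $t$ whose neighborhoods force a fixed value for $\sigma(n)$ is open; connectedness of $[0,1]$ then forces this value to be constant across $O$. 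Hence $O$ already determines all of $\sigma$, making it a ground-model permutation. Without this convexity/connectedness argument your proof is incomplete.

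Conversely, what you call ``the hard half'' is in the paper a few lines, and your simultaneous scheduling construction is unnecessary. Given $(a_n)\in O$, since $(a_n)$ is eventually $0$ there is a $K$ beyond which the partial sums $\sum_{n\le k}a_{\sigma(n)}$ are exactly $0$ for \emph{every} $\sigma$ constrained by $O$. Pick $\delta>0$ smaller than all the $\epsilon$'s appearing in $O$, choose a slot $i$ (beyond $N$ and outside each $\sigma$'s image of $\{0,\dots,K\}$), and set $a_i=\delta$; then iterate to find a further safe slot $j$ and set $a_j=-\delta$. The modified sequence is again eventually $0$, sums to $0$, and lies in $O$, since along each constrained $\sigma$ the perturbed partial sums rise to at most $|\delta|<\epsilon$ and return to $0$. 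Now repeat from the new, larger $K$. Each pair contributes $2\delta$ to $\sum|a_n|$, so no bound is forced. No interleaving across permutations is needed, because after each $\pm\delta$ pair all partial sums are back to $0$ before the next pair is placed.
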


\begin {proof}
The generic $G$ induces the generic sequence of reals $(g_n)$,
with $O \Vdash ``g_n \in I_n."$ Also, $T \Vdash ``(g_n)$ is
total," since, for every $(a_n) \in T$ and $k$, the open set
determined by any $(I_0, ... , I_k),$ with $a_n \in I_n$, and no
$\sigma$'s, forces $``g_k$ is defined." The generic sequence
$(g_n)$ will witness the failure of RPT.

First, we want to see that for every ground model permutation
$\sigma, \; \Sigma g_{\sigma(n)}$ converges. Notice that for every
$(a_n) \in T$ and $\epsilon > 0$ there an $M$ such that the open
set determined by associating $\epsilon$ and $M$ to $\sigma$
contains $(a_n)$, for the simple reasons that $\Sigma a_n$
converges to 0 and that $(a_n)$ is eventually 0: just choose $M$
so large so that all non-0 entries of $(a_n)$ have already
occurred in $a_{\sigma(n)}$ by the $M^{th}$ entry there. It
follows immediately that $T \Vdash ``\Sigma g_{{\hat \sigma}(n)} =
0."$

As for arbitrary permutations, suppose $O \Vdash ``\sigma$ is a
permutation." We claim that no extensions of $O$ can force
different values for any $\sigma(n)$; if that is so, then $O$
itself forces all of the values of $\sigma(n)$. To see the claim,
let $(a_n)$ and $(b_n)$ be two members of $O$. Consider the
continuous family of sequences $(c_n)^t := t(b_n) + (1-t)(a_n), \;
0 \leq t \leq 1.$ Notice that $(c_n)^0 = (a_n)$, $(c_n)^1 =
(b_n)$, and, for all $t$, $(c_n)^t \in O$, since the constraints
imposed by $O$ are linear. For any value of $t_0$ of $t$, some
neighborhood of $(c_n)^{t_0}$ forces a value for $\sigma(n)$. Any
such neighborhood forces the same value for all $(c_n)^t$ for $t$
in a neighborhood of $t_0$; that is, those $t$'s that force any
fixed value for $\sigma(n)$ form an open set. Since [0,1] is
connected, all $(c_n)^t$'s must have neighborhoods forcing the
same value for $\sigma(n)$. Hence the values of $\sigma(n)$ are
all determined by $O$. As the forcing relation is definable in the
ground model, these values form a ground model permutation. Since
all permutations are equal locally to ground model permutations,
by the previous paragraph, $\Sigma g_{\sigma(n)}$ converges for
all $\sigma$.

It remains only to show that $T \Vdash ``(g_n)$ diverges
absolutely." Consider any $(a_n) \in O$. There is a $K$ such that,
for any $\sigma$ which is a part of $O$'s definition, the partial
sums beyond $K$ of the permuted sequence are 0: for $k>K, \;
\Sigma_{n=0}^k a_{\sigma(n)} = 0.$ (It suffices to take $K = \max
\{\sigma^{-1}(n) | a_n \not = 0$ and $\sigma$ is constrained by
$O$ \}.) Choose some $i > \sigma"(K)$ (the image of $K$ under
$\sigma$), and change $a_i$ to be $\delta$, where $\delta$ is less
than all of the $\epsilon$-constraints imposed by $O$. Iterate to
find another safe spot $j$, and change $a_j$ to be $- \delta$.
This can be iterated to get the sum of the absolute values to be
as big as you want. Hence $O$ does not force any bound on the sum
of the absolute values.

\end {proof}

\section {That all partially Cauchy sequences are Cauchy does not imply BD-N}
Following a definition of Fred Richman (private notes), we say
that a sequence of reals $x_n$ is {\em partially Cauchy} if, for
all increasing $h$, $\lim_n \mathrm{diam}(x_n, x_{n+1}, ... ,
x_{h(n)}) = 0.$ (The diameter of a set in a metric space is the
supremum of the distances between members of the set, taken two at
a time, if this supremum exists. If the set is finite, as it is
here, the supremum does exist.) Richman showed, among other
things, that, under BD-N, every partially Cauchy sequence is
Cauchy. In this section we show that BD-N is not necessary for
this, in that the latter result does not imply BD-N. In the next
section, we show that the result in question is not provable in
basic set theory alone.

Let $T$ be the space from \cite {RSL}, reviewed in section 3
above, the model over which falsifies BD-N. As in the other cases,
we have:

\begin {theorem} $T \Vdash$ ``Every partially Cauchy sequence is
Cauchy."
\end {theorem}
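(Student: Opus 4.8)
The plan is to mimic the structure of the RPT-holds proof (Theorem 3.4) as closely as possible, since the underlying space $T$ is the same one from \cite{RSL}. Suppose $p \Vdash ``(x_n)$ is a partially Cauchy sequence.'' We want a neighborhood of an arbitrary $f \in p$ forcing that $(x_n)$ is Cauchy. As before, we may assume $p$ is basic open with $g_p(\mathrm{stem}(p)) \geq \sup(\mathrm{rng}(f))$, and it suffices to shrink $p$ to some $r$ with the same stem and same value at the stem (so $f \in r$) that forces $(x_n)$ to be Cauchy. We may also assume (by Countable Choice, which holds in this model) that each $x_n$ is rational. Then, exactly as in Lemma 3.3, pass to $q \subseteq p$ with the same stem and stem-value which \emph{determines} the values of the $x_n$: for each $n$ there is a length $i_n$, increasing in $n$, such that every $\sigma$ of length $i_n$ compatible with $q$ has $q \upharpoonright \sigma$ forcing a rational value $r_\sigma$ for $x_n$. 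Form the tree $Tr_q$ as in the RPT section.

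**Next** comes the analogue of Lemma 3.5: for any bounded subtree $Tr \subseteq Tr_q$, the sequence $(x_n)$ is forced to be Cauchy \emph{along} $Tr$, i.e.\ there is a modulus --- a function giving, for each $\varepsilon$, an $N$ such that for all $\sigma \in Tr$ of length $i_n$ with $n \geq N$, and all $m, m' \geq N$ with $i_m, i_{m'} \leq \mathrm{length}(\sigma)$, we have $|r_{\sigma \upharpoonright i_m} - r_{\sigma \upharpoonright i_{m'}}| \leq \varepsilon$. The proof is the now-familiar König's-lemma-style argument: call $\tau$ good if Cauchyness-with-a-modulus holds for $Tr \upharpoonright \tau$; if all immediate extensions of $\tau$ are good then so is $\tau$ (take the pointwise max of the finitely many moduli); so if the root is bad there is a path $f$ of bad nodes, and since $Tr$ is bounded, $f \in T$. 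Along this bad path we can, for some fixed $\varepsilon > 0$, find nodes arbitrarily far out witnessing jumps of size $\geq \varepsilon$ --- and here is where \emph{partial} Cauchyness enters. We use the bad path to define an increasing function $h$ (in the ground model): having committed to an initial segment up to some $i_{n}$, the badness of $f$ lets us find a later $i_{n'}$ and two indices $m, m'$ in the range $[n, n']$ with $|r_{f\upharpoonright i_m} - r_{f\upharpoonright i_{m'}}| \geq \varepsilon$; set $h(n) \geq n'$ and iterate. Then no neighborhood of $f$ can force $\lim_n \mathrm{diam}(x_n, \dots, x_{h(n)}) = 0$, contradicting that $q \Vdash ``(x_n)$ is partially Cauchy.'' (One subtlety: $h$ must be genuinely increasing and total; since at each stage we only need \emph{some} witnessing pair beyond the current point and badness supplies it, this is routine, but one should be careful to define $h$ on \emph{all} integers, interpolating the uninteresting values.)

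**Finally**, the passage from bounded subtrees to all of $q$ is the exhaustion argument verbatim from the end of the Theorem 3.4 proof. Let $Tr_1 \subseteq Tr_q$ be the nodes with all values $\leq I := g_q(\mathrm{stem}(q))$; the previous lemma gives a modulus $\mu_1$ witnessing Cauchyness along $Tr_1$. Let $Tr_j$ extend $Tr_1$ by permitting the value $I+1$ beyond position $j$; the lemma gives a modulus, and as $j \to \infty$ these ``degrade'' monotonically --- and, by a construction as in the previous lemma (using partial Cauchyness exactly as above to build a bad $h$ from a limiting failure), they must in fact stabilize, so we may choose $j$ so that the new modulus agrees with $\mu_1$ up to, say, doubling $\varepsilon$-tolerances. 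Iterate: at stage $s$ allow the value $I+s$ past some point, losing only a factor in the modulus controlled by $2^{-s}$. The union $Tr_\infty = \bigcup_s Tr_s$ induces an open set $r \subseteq q$ (with the same stem and stem-value, so $f \in r$) which forces a single, honest modulus of Cauchyness for $(x_n)$ --- namely $\mu_1$ loosened by the convergent accumulation of the $2^{-s}$ losses. Hence $r \Vdash ``(x_n)$ is Cauchy,'' as required.

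**The main obstacle** I expect is getting the ``bad path $\Rightarrow$ divergent $h$'' step exactly right: partial Cauchyness is a $\forall h$ statement, so falsifying it requires producing \emph{one} increasing $h$ along which the diameters don't go to $0$, and the diameters $\mathrm{diam}(x_n, \dots, x_{h(n)})$ aggregate \emph{all} pairs in a block, not just a chosen pair --- so one must ensure the witnessing jump along $f$ actually survives inside the block $[n, h(n)]$ and isn't an artifact of comparing values at incomparable nodes. Because all the relevant values lie along the single path $f$, this is fine, but it needs the $i_n$'s to be nested correctly (which Lemma 3.3 guarantees by making $i_n$ increasing) and a little care that $h(n)$ is defined for every $n$, not just the ``active'' stages of the recursion. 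The rest is bookkeeping parallel to \cite{RSL} and to Section 3, and as the authors note, such proofs ``should seem familiar.''
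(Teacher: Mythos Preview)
Your proposal tracks the paper's proof closely through the reduction to rationals, the determination lemma, and the K\"onig-style argument on bounded subtrees; this is indeed the same architecture. Two points deserve comment.

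First, a small mis-step in your analysis of the bad-path argument: you say ``all the relevant values lie along the single path $f$,'' but that is not what badness gives you. Badness of $f \upharpoonright k$ means the \emph{subtree beneath} $f \upharpoonright k$ contains a witnessing pair $\sigma_m \subseteq \sigma_n$; these need not lie along $f$. The reason $h$ still refutes partial Cauchyness at $f$ is that every basic neighborhood of $f$ contains that entire subtree (beneath some initial segment of $f$), so no neighborhood can force $\mathrm{diam}(x_k,\dots,x_{h(k)}) < \delta$. Your worry about ``incomparable nodes'' is thus misplaced, but for the opposite reason than you suggest.

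Second, and more substantively, your exhaustion step imports the RPT template too literally. In Section~3 the key object is a real-valued bound $B$, and the argument that $B_\infty = B_1$ uses the positive difference $B_\infty - B_1$ to build a bad permutation. Here there is no analogous scalar: what the bounded-subtree lemma produces for each $\delta$ is an integer $k$, and your talk of moduli that ``degrade monotonically'' and ``must stabilize'' is left vague. The paper sidesteps this entirely. It fixes a single rational $\epsilon > 0$ at the outset and, at stage $s$, applies the lemma to $Tr_s$ with $\delta = \epsilon/2^s$ to obtain $k_s$; then $Tr_{s+1}$ is simply the tree allowing the value $I+s$ beyond position $i_{k_s}$. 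No limit or stabilization is needed: $g_r$ is defined to equal $I+s-1$ on $[i_{k_s}, i_{k_{s+1}})$, and a direct telescoping shows that beyond $N = k_1$ all values are within $\sum_s \epsilon/2^s < \epsilon$. Your sketch gestures at the same $2^{-s}$ bookkeeping but wraps it in an unnecessary (and unclear) stabilization layer; dropping that and fixing $\epsilon$ up front gives exactly the paper's argument.
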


\begin {proof}
Suppose $p \Vdash ``(x_n)$ is partially Cauchy." In a personal
communication, Fred Richman studied several notions of Cauchyness,
all akin to partiality, and showed essentially that any sequence
which is Cauchy in any sense (partially, weakly, almost) is the
sum of a Cauchy sequence (in as strong a sense as you like) and a
rational sequence which is Cauchy in the same sense as the
starting sequence. His proof uses Countable Choice, which is no
problem here, as $T \Vdash$ Dependent Choice (see \cite {RSL}),
which implies Countable Choice, and is otherwise straightforward.
The upshot of this is that we can assume that each $x_n$ is
rational.

For every $f \in p$ we must find a neighborhood $q$ of $f$ forcing
$(x_n)$ to be Cauchy. So let $T \Vdash \epsilon > 0.$ It suffices
to assume $\epsilon$ is rational, so we do not have to deal with
conditions forcing $\epsilon$ to have an approximate value. We
assume as usual that $p$ is basic open and that $g_p$(stem($p))
\geq \sup(\mathrm{rng}(f))$. So it suffices to extend $p$ to $r$
forcing an appropriate value $N$ for $\epsilon$, without altering
the stem or the value $g_p$(stem($p))$ (i.e. stem($p$) = stem($r$)
and $g_p({\rm stem} (p)) = g_r({\rm stem} (r))$), as $f$ will then
be in $r$.

As in section 3 above, we state without proof:

\begin {lemma}
There is an open set $q \subseteq p$, with ${\rm stem}(q) = {\rm
stem}(p)$ and $g_q({\rm stem}(q)) = g_p({\rm stem}(p)),$ which
determines the values of $x_n$ in the following sense: for every
$n \in \mathbb{N}$ there is a length $i_n$ (increasing as a
function of $n$) such that, for all $\sigma$ of length $i_n$
compatible with $q$, $q \upharpoonright \sigma$ forces a
(rational) value for $x_n$, say $r_\sigma$.
\end {lemma}

With terminology and notation as in section 3 above, we have the
following analogue of lemma 3.6.

\begin {lemma} Let $Tr \subseteq Tr_q$ be bounded, and $\delta >
0$ be rational. Then there is a natural number $k$ such that, for
all $m, n \geq k, m<n,$ and $\sigma_m \subseteq \sigma_n$ of
lengths $i_m$ and $i_n$ respectively, $\left| x_{\sigma_m} -
x_{\sigma_n} \right| < \delta.$
\end {lemma}

\begin {proof}
Say that $\tau \in Tr$ is good if the conclusion of the lemma is
satisfied for $Tr$ restricted to $\tau$. Notice that if every
immediate extension of $\tau$ is good then $\tau$ itself is good,
by taking the maximum of the $k$'s witnessing the goodness of
$\tau$'s extensions. So if the root of $Tr$ is bad (i.e. not good)
then there is a path $f$ through $Tr$ consisting of all bad nodes.
Because $Tr$ is bounded, $f$ is a member of the topological space
$T$.

Define the function $h$ as follows. Given $k$, let $m$ and $n$ be
as given by the badness of $f \upharpoonright k$ (i.e. there are
nodes $\sigma_m \subseteq \sigma_n$ in the tree beneath $f
\upharpoonright k$ with $\left| x_{\sigma_m} - x_{\sigma_n}
\right| \geq \delta).$ Let $h(k)$ be at least as big as that $n$
(and, for $k>0$, bigger than $h(k-1)$). Then $h$ witnesses that
$(x_n)$ is not partially Cauchy, as any neighborhood of $f$ must
contain all of $Tr$ restricted to some initial segment of $f$.
\end {proof}

To complete the proof, let $Tr_1$ be the subtree of $Tr_q$ of all
nodes with values less than or equal to $I := g_q$(stem$(q))$.
Apply the lemma with $Tr_1$ for $Tr$ and $\epsilon/2$ for
$\delta$. Let $k_1$ be the integer produced by the lemma. Let
$Tr_2 \subseteq Tr_q$ extend $Tr_1$ by allowing nodes that may
take on the value $I+1$ at positions beyond $i_{k_1}$. Again apply
the lemma, with $Tr_2$ for $Tr$ and $\epsilon/4$ for $\delta$, to
produce $k_2$, which can be taken to be larger than $k_1$. More
generally, at stage $s$, let $Tr_s \subseteq Tr_q$ extend
$Tr_{s-1}$ by allowing the value $I+s-1$ beyond $i_{k_{s-1}}$, and
let $k_s > k_{s-1}$ be the result of applying the lemma to $Tr_s$
and $\epsilon/2^s$.

To finish the definition of $r$, we must just give $g_r$, and show
that beyond $N := k_1$ $r$ forces the values of $(x_n)$ to be
within $\epsilon$ of one another. As motivation, consider $x_N$
itself, as compared with $x_m$ for some larger $m$ (larger than
$N$). If the value of $x_m$ is determined by some node in $Tr_1$,
we're golden -- even better than golden, $x_m$ being within
$\epsilon/2$ of $x_N$. But once we go into $Tr_2$, all bets are
off. Hence we want to restrict $Tr_r$ to equal $Tr_1$ at least for
nodes up to length $i_{k_2}$. If $m \leq k_2$, then $x_m$ is
determined by $Tr_1$, and we're done. For $m > k_2$, at least we
can bound $\left| x_N - x_{k_2} \right|$ by $\epsilon/2$, and work
on bounding $\left| x_{k_2} - x_m \right|$ by $\epsilon/4$, which
would suffice. While working on the latter, we can now afford to
be in the tree $Tr_2$. By continuing to expand the tree in which
we work in this fashion, we can guarantee that $g_r$ be unbounded,
while still remaining within $\epsilon$ of $x_N$.

So let $g_r$ between $i_{k_s}$ and $i_{k_{s+1}}$ have the value
$I+s-1$. This makes $g_r$ be unbounded, and forces the values of
$(x_n)$ beyond $N$ to be within $\epsilon$ of one another, by the
argument sketched in the previous paragraph.
\end {proof}

\section {Partially Cauchy sequences may not all be Cauchy}
As usual, in the coming topological model the generic will be a
partially Cauchy sequence which is not Cauchy. We start by
defining the underlying topological space.

\begin {definition}

Let $T$ be the space of all Cauchy sequences $(x_n)$. A basic open
set is given by finitely many pieces of information. One is a
finite sequence of intervals $I_n (n<k)$. A sequence $(x_n)$
satisfies the requirements $I_n (n<k)$ if for all $k<n \; x_n \in
I_n.$ In addition, to each of finitely many functions $h$ and
rational numbers $\epsilon > 0$ is associated a natural number
$n_{h, \epsilon}$. A sequence $(x_n)$ satisfies that requirement
if for all $n \geq n_{h, \epsilon} \; \mathrm{diam}(x_n, x_{n+1},
... , x_{h(n)}) < \epsilon$. The basic open sets as given are
closed under intersection, and so form a basis.

\end {definition}

\begin {theorem}
$T \Vdash$``Not every partially Cauchy sequence is Cauchy."
\end {theorem}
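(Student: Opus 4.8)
The plan is to show that the generic sequence $(g_n)$ is partially Cauchy but not Cauchy in the model over $T$. First I would observe that $T \Vdash ``(g_n)$ is a total sequence of reals," since for any Cauchy sequence $(x_n) \in T$ and any $k$, the basic open set specifying intervals $I_n \ni x_n$ for $n < k$ (with no $h,\epsilon$-data) forces $g_k$ to be defined. Next, the key positive claim: $T \Vdash ``(g_n)$ is partially Cauchy." Unwinding the definition, given an open set $O$ and a point $(x_n) \in O$, and given (a name for) an increasing function $h$ and a rational $\epsilon > 0$, I must shrink $O$ to force $\mathrm{diam}(g_n,\dots,g_{h(n)}) < \epsilon$ for all sufficiently large $n$. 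The subtle point is that $h$ is only a name, but by a connectedness/linearity argument exactly like the one in the $\neg$RPT proof of Theorem 4.3 — the constraints defining members of $T$ are not linear here, but a local argument still applies: around any $(x_n) \in O$ some neighborhood forces finitely many values $h(0),\dots,h(j)$, and one can proceed. Alternatively, and more cleanly, I would exploit that $(x_n)$ being genuinely Cauchy in the ground model means: for the rational $\epsilon$, there is $M$ with $\mathrm{diam}(x_M, x_{M+1}, \dots) < \epsilon/2$ in the ground model, so for \emph{every} ground-model increasing $h$, $\mathrm{diam}(x_n,\dots,x_{h(n)}) < \epsilon/2$ once $n \geq M$; then the open subset of $O$ that additionally specifies the data $(h,\epsilon) \mapsto M$ contains $(x_n)$ and forces what we want. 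Handling arbitrary (non-ground-model) $h$ is then the real content, and I expect to dispatch it by the same local-to-global connectedness trick used in Theorem 4.3, reducing an arbitrary forced $h$ to ground-model behavior on each initial segment.

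Then I would show $T \Vdash ``(g_n)$ is not Cauchy." Here I take $\epsilon := 1$ (or any fixed rational) and argue that no open set $O$ forces the existence of an $N$ with $|g_m - g_n| < 1$ for all $m,n \geq N$. Given $O$, specified by intervals $I_n\ (n<k)$ and finitely many triples $(h, \epsilon, n_{h,\epsilon})$, and given any candidate $N$, I want a member of $O$ and indices $m, n \geq N$ with $|x_m - x_n| \geq 1$. The obstruction is the finitely many diameter constraints: each pair $(h,\epsilon)$ forbids large oscillation on the blocks $[j, h(j)]$ for $j \geq n_{h,\epsilon}$. But since there are only finitely many such $h$, and each block $[j,h(j)]$ is finite, I can find a position $p$ far out (beyond $k$, beyond all $n_{h,\epsilon}$, and beyond $N$) that is "free" — not trapped inside any of the forbidden blocks together with an earlier point in a way that would be constrained — and, crucially, I can build an honest Cauchy sequence (eventually constant, say) that agrees with a given point of $O$ on its first $k$ coordinates, respects all the diameter constraints, yet makes a jump of size $\geq 1$ between two entries both past $N$. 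The cleanest construction: take any $(x_n) \in O$, leave $x_n$ for $n < k$, then after a long stretch of repeating $x_{k-1}$ (long enough to satisfy every $h,\epsilon$ constraint, which only restrict *consecutive* blocks) ramp up gradually to a value $1$ larger and stay there — gradual ramping keeps all the forced diameters below their thresholds, since those thresholds only bind for the specific finitely many $(h, \epsilon)$ and we can spread the ramp over more than $\max_j (h(j) - j)$ steps.

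The main obstacle is this last construction: ensuring that the gradual ramp simultaneously (i) lands inside every interval $I_n$ for $n < k$ — automatic, since we only modify coordinates $\geq k$; (ii) respects *all* finitely many diameter constraints $\mathrm{diam}(x_n,\dots,x_{h(n)}) < \epsilon$ for $n \geq n_{h,\epsilon}$ — this forces the ramp to climb slowly, at rate less than $\epsilon / (h(n) - n)$ worst-case, which is fine since all data is finite; (iii) remains a genuine Cauchy sequence — automatic once it is eventually constant; and (iv) actually achieves $|x_m - x_n| \geq 1$ for two indices past the given $N$. Reconciling (ii) and (iv) is the crux: one must check that the finitely many window-widths $\max_{n \geq n_{h,\epsilon}}(h(n) - n)$ need not be bounded, so instead of a uniformly slow ramp one picks the ramp to occur within a region where, for each constrained $h$, the relevant windows have already been "used up" or can be straddled harmlessly — concretely, ramp up by tiny increments $\delta < \min \epsilon$ per step over a very long interval, and note that over any window $[n, h(n)]$ the total variation is at most $(h(n)-n)\delta$, which we cannot a priori bound; the fix is to recall each $h$ is a *function*, so $h(n) - n$ is a fixed finite number for each $n$, and we only need the ramp slow enough *locally*, adjusting the per-step increment to be below $\epsilon / (h(n)-n)$ as $n$ varies — the minimum over the finitely many $h$ of these local bounds is positive at each $n$, so a slowly-enough-varying ramp exists and, being allowed arbitrarily many steps, still reaches height $\geq 1$. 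Once this is in place, $O$ fails to force Cauchyness, and combined with the first part, $(g_n)$ witnesses $T \Vdash ``$Not every partially Cauchy sequence is Cauchy."
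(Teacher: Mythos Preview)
Your overall architecture is exactly the paper's: show the generic sequence $(g_n)$ is partially Cauchy but not Cauchy. For the partially Cauchy half you are fine, and in fact slightly too cautious: the basic open sets here \emph{are} convex (interval membership is convex, the condition $\mathrm{diam}(x_n,\dots,x_{h(n)})<\epsilon$ is convex, and Cauchy sequences are closed under convex combination), so the line $t(b_n)+(1-t)(a_n)$ stays in $O$ and the connectedness argument from Theorem~4.3 goes through verbatim to show every forced $h$ agrees locally with a ground-model function. The paper does precisely this.

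The gap is in your ``not Cauchy'' construction. Your proposed ramp with step size $\delta_n<\epsilon/(h(n)-n)$ fails on two counts. First, it does not actually control the window: the variation over $[m,h(m)]$ is $\sum_{i=m}^{h(m)-1}\delta_i$, not $(h(m)-m)\delta_m$, and these are unrelated for variable $\delta$. Second, and fatally, the series $\sum_n \epsilon/(h(n)-n)$ can converge---take $h(n)=2^n$, giving $\delta_n<\epsilon/(2^n-n)$, a summable sequence---so your ramp need never reach height~$1$ no matter how many steps you allow. The assertion ``being allowed arbitrarily many steps, still reaches height $\geq 1$'' is exactly the missing step and is false as written.

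The paper's remedy is to align the construction with $h$ itself rather than fight it pointwise. After reducing to a single $h$ and a single $\epsilon$ (pointwise max, min respectively), it holds the sequence \emph{constant} on the block $[j,h(j)]$, jumps by just under $\epsilon/2$, holds constant on $[h(j)+1,\,h(h(j)+1)]$, jumps again, and so on. Because $h$ is increasing, any window $[n,h(n)]$ with $n$ in block $k$ ends no later than block $k+1$, so its diameter is at most one jump, i.e.\ $<\epsilon$. Meanwhile each block contributes a jump of nearly $\epsilon/2$, and after about $2/\epsilon+1$ blocks the sequence has climbed by more than~$1$; making it constant thereafter gives a genuine Cauchy sequence in $O$. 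This block-jump idea is the content you are missing.
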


\begin {proof}
The generic induces a sequence $(g_n)$ of reals. For every $(x_n)
\in T$, ground model function $h$ from $\mathbb{N}$ to itself, and
rational $\epsilon > 0$, there is a neighborhood $O$ of $(x_n)$
assigning a value to $n_{h, \epsilon}$. Then $O \Vdash ``$ if $n
\geq n_{h, \epsilon}$ then $\mathrm{diam}(g_n, g_{n+1}, ... ,
g_{h(n)}) < \epsilon"$. Furthermore, in this model, all functions
from $\mathbb{N}$ to itself are ground model functions, by the
same argument as for permutations with respect to the RPT. Hence
the generic sequence is partially Cauchy.

To see that the generic sequence is not itself Cauchy, let $O$ be
an open set and $N$ an arbitrary natural number, which without
loss of generality is less than $k$, the length of $O$'s sequence
of intervals. We will show that $O$ does not force that beyond $N$
the values of the generic always stay within 1 of each other,
which suffices.

To simplify on notation (and thinking), we can strengthen (i.e.
shrink) $O$ by reducing to one $h$ (by taking the pointwise
maximum of the finitely many $h$'s) and one $\epsilon$ (by taking
the smallest). To be sure, this summary requirement does not
capture all of the actual requirements present before this
simplification, as there may have been demands made on intervals
starting at $n < n_{h, \epsilon}$. But those demands are only
finite in number, and can be satisfied by choosing $I_n (n < n_{h,
\epsilon})$ to be sufficiently small.

Pick a sequence of values $x_n$ of length $j := \max(n_{h,
\epsilon}, k)$ which is an initial segment of a member of $O$.
Extend that finite sequence to have a value just under $x_{j-1} +
\epsilon/2$ at the places $j$ through $h(j)$. Extend again to have
a value just under $x_{h(j)} + \epsilon/2$ at the places $h(j) +
1$ through $h(h(j) + 1)$. Extend again, by adding almost another
$\epsilon/2$ to the last value, from the next place, say $s$,
until $h(s)$. Continue this process for at least $2/\epsilon +
1$-many steps, at which point pick the Cauchy sequence which is
constant from that point on. The upshot is, beyond $N$ the
sequence has increased by more than 1.
\end {proof}

\begin {thebibliography} {99}
\bibitem{Bee} Michael Beeson, ``The nonderivability in intuitionistic formal
systems of theorems on the continuity of effective operations,"
{\bf Journal of Symbolic Logic}, v. 40 (1975), p. 321-346
\bibitem{BeB1} Josef Berger and Douglas Bridges, ``A Fan-theoretic
equivalent of the antithesis of Specker's Theorem," {\bf Proc.
Koninklijke Nederlandse Akad. Wetenschappen} (Indag. Math., N.S.),
v. 18 (2007), p. 195-202
\bibitem{BeB2} Josef Berger and Douglas Bridges, ``The anti-Specker
property, a Heine–Borel property, and uniform continuity," {\bf
Archive for Mathematical Logic}, v. 46 (2008), p. 583-592
\bibitem{Br09} Douglas Bridges, ``Constructive notions of
equicontinuity," {\bf Archive for Mathematical Logic}, v. 48
(2009), p. 437-448
\bibitem{BeB3} Josef Berger and Douglas Bridges, ``Rearranging
series constructively," {\bf Journal of Universal Computer
Science}, v. 15 (2009), p. 3160-3168
\bibitem{BBDS} Josef Berger, Douglas Bridges, Hannes Diener and
Helmut Schwichtenberg, ``Constructive aspects of Riemann's
permutation theorem for series," submitted for publicationnnnn
\bibitem{BBP} Josef Berger, Douglas Bridges, and Erik Palmgren,
``Double sequences, almost Cauchyness, and BD-N," {\bf Logic
Journal of the IGPL}, v.20 (2012), p. 349-354, doi:
10.1093/jigpal/jzr045
\bibitem{Br} Douglas Bridges, ``Inheriting the anti-Specker property",
{\bf Documenta Mathematica}, v. 15 (2010), p. 9073-980
\bibitem{BISV} Douglas Bridges, Hajime Ishihara, Peter Schuster,
and Luminita Vita, ``Strong continuity implies uniformly
sequential continuity," {\bf Archive for Mathematical Logic}, v.
44 (2005), p. 887-895
\bibitem{G1} Robin J. Grayson, ``Heyting-valued models for intuitionistic
set theory," in {\bf Applications of Sheaves}, Lecture Notes in
Mathematics, vol. 753 (eds. Fourman, Mulvey, Scott), Springer,
Berlin Heidelberg New York, 1979, p. 402-414
\bibitem{G2} Robin J. Grayson, ``Heyting-valued semantics," in {\bf Logic
Colloquium '82}, Studies in Logic and the Foundations of
Mathematics, vol. 112 (eds. Lolli, Longo, Marcja) , North-Holland,
Amsterdam New York Oxford, 1984, p. 181-208
\bibitem{I91} Hajime Ishihara, ``Continuity and nondiscontinuity
in constructive mathematics," {\bf Journal of Symbolic Logic}, v.
56 (1991), p. 1349-1354
\bibitem{I92} Hajime Ishihara, ``Continuity properties in
constructive mathematics," {\bf Journal of Symbolic Logic}, v. 57
(1992), p. 557-565
\bibitem{I01} Hajime Ishihara, ``Sequential continuity in
constructive mathematics," in {\bf Combinatorics, Computability,
and Logic} (eds. Calude, Dinneen, and Sburlan), Springer, London,
2001, p. 5-12
\bibitem{IS} Hajime Ishihara and Peter Schuster, ``A Continuity
principle, a version of Baire's Theorem and a boundedness
principle," {\bf Journal of Symbolic Logic}, v. 73 (2008), p.
1354-1360
\bibitem{IY} Hajime Ishihara and Satoru Yoshida, ``A Constructive
look at the completeness of {\it D}({\bf R})," {\bf Journal of
Symbolic Logic}, v. 67 (2002), p. 1511-1519

\bibitem{L} Peter Lietz, ``From Constructive Mathematics to
Computable Analysis via the Realizability Interpretation," Ph.D.
thesis, Technische Universit\"at Darmstadt, 2004,
http://www.mathematik.tu-darmstadt.de/~streicher/THESES/lietz.pdf.gz;
see also ``Realizability models refuting Ishihara's boundedness
principle," joint with Thomas Streicher, submitted for publication
\bibitem {RSL10} Robert Lubarsky, ``Geometric spaces with no
points," {\bf Journal of Logic and Analysis}, v. 2 No. 6 (2010),
p. 1-10, http://logicandanalysis.org/, doi: 10.4115/jla2010.2.6
\bibitem{RSL} Robert Lubarsky, ``On the Failure of BD-N and BD, and an
application to the anti-Specker property," {\bf Journal of
Symbolic Logic}, to appear
\end {thebibliography}
\end{document}